\newlength{\rulebreite}
\def\timesover#1#2#3{\ \xymatrix@1@=0pt@M=0pt{ _{#1}&\times&_{#2} \\& ^{#3}&}\ }
\def\otimesover#1#2#3{\ \xymatrix@1@=0pt@M=0pt{ _{#1}&\otimes&_{#2} \\& ^{#3}&}\ }
\theoremstyle{plain}
\newtheorem{thm}{Theorem}
\newtheorem{lem}[thm]{Lemma}
\newtheorem{cor}[thm]{Corollary}
\newtheorem{prop}[thm]{Proposition}
\theoremstyle{definition}
\newtheorem{defn}[thm]{Definition}
\newtheorem{question}[thm]{Question}
\newtheorem{rmk}[thm]{Remark}
\newtheorem{ex}[thm]{Example}
\numberwithin{thm}{section}
\numberwithin{equation}{section}
\newcommand{\sO}{{\mathcal O}}
\newcommand{\C}{{\mathbb C}}
\newcommand{\F}{{\mathbb F}}
\newcommand{\N}{{\mathbb N}}
\newcommand{\Q}{{\mathbb Q}}
\def\tilde{\widetilde}
\begin{document}

\title{Constraints on the cohomological correspondence associated to a self map }
\author{K. V. Shuddhodan}
\email{kvshud@math.tifr.res.in}
\address{School of Mathematics\\Tata Institute of Fundamental Research,\newline Homi
Bhabha Road\\Colaba\\Mumbai-400005\\India}
\curraddr{Freie Universit\"at Berlin\\Arnimallee 3\\14195\\Berlin\\Germany}

\begin{abstract}

In this article we establish some constraints on the eigenvalues for the action of a self map of a proper variety on its $\ell$-adic cohomology.~The essential ingredients are a trace formula due to Fujiwara,~and the theory of weights.

\end{abstract}

\maketitle

\section{Introduction}

Let $X$ be a smooth projective variety over a field $k$ and $f:X \to X$ a dominant self map of $X$.~When $k=\C$ one can look at the topological entropy of the dynamical system $(X(\C),f)$,~which has been shown to be equal to the logarithm of the spectral radius of $f^*$ acting on the Betti cohomology of $X(\C)$ (\cite{Gromov},~\cite{Yomdin}).~When $k$ is an arbitrary field,~there is no obvious and useful notion of a topological entropy,~however it still makes sense to look at the action of $f^*$ on suitable cohomology theories.

Esnault-Srinivas studied this question for automorphisms of surfaces over finite fields (\cite{Esnault-Srinivas}). They look at the action of $f^*$ on the $\ell$-adic cohomology of a smooth proper surface defined over an algebraic closure of a finite field.~In this case they show that the spectral radius on $H^{*}_{\text{\'et}}(X,\Q_{\ell})$ coincides with the spectral radius on the smallest $f^*$ stable sub-algebra generated by any ample class $\omega \in H^{2}_{\text{\'et}}(X,\Q_{\ell})$.~Since such a sub-space of the cohomology is stable under specialization,~this at once proves the result for automorphisms of surfaces over arbitrary base fields.~This in particular leads naturally to the question of whether the spectral radius restricted to the even degree cohomology is the same as that on the entire cohomology (see \cite{Esnault-Srinivas},~Section 6.3).

From an arithmetical viewpoint there is an abundance of self maps arising from the Galois group of the base field.~The corresponding action on cohomology gives rise to interesting Galois representations.~These Galois representations are often hard to understand and contain information about the variety itself.~We combine these arithmetical and dynamical points of view to define a \textit{zeta function} associated to a self map of a variety over a finite field (see Definition \ref{Zeta}).~We obtain the principal result of this article by studying the analytical properties of this zeta function,~which we shall state now.

Let $k$ be either an algebraic closure of a finite field or the field of complex numbers $\C$.~Fix a prime $\ell$ invertible in $k$ (if $\text{char}(k)>0$),~and an embedding
 
  \begin{equation}\label{l-adic_complex_embedding_intro}
  \tau: \Q_{\ell} \hookrightarrow \C.
  \end{equation}

Suppose $X$ is a proper scheme over $k$.~Let $H^*(X)$ be the $\ell$-adic cohomology of $X$ (when $\text{char}(k)>0$) with its increasing weight filtration (\cite{Deligne-Weil II},~Section 5.3.6), or the singular cohomology $H^*(X(\C),\Q)$ (when $k=\C$) with its mixed Hodge structure (\cite{Deligne-Hodge III},~Proposition 8.1.20).~Let $W_kH^*(X)$ be the associated weight filtration.
 
 Let $f:X \to X$ be a self map of $X$.
  
 Let $\lambda_{\text{odd}}$ (resp. $\lambda_{\text{even}}$) be the spectral radius (with respect to $\tau$ if $\text{char}(k)>0$) for the action of $f^*$ on $\oplus_{i \geq 0} H^{2i+1}(X)$ (resp. $\oplus_{i \geq 0} H^{2i}(X)$).~Let $k_{\text{odd}}$ be the maximum among integers $k$ with the property that the spectral radius for the action of $f^*$ on $\text{Gr}^{k}_WH^i(X)$ is $\lambda_{\text{odd}}$,~where $i$ is an odd integer.~Similarly define $k_{\text{even}}$.
   
 \begin{thm}\label{entropy_restriction_intro}
 
  Using the above notations,~we have
 
  \begin{enumerate}
  
 \item[(1)] $\lambda_{\text{even}} \geq \lambda_{\text{odd}}$.
  
  \item[(2)] If equality holds in $(1)$,~then $k_{\text{even}} \geq k_{\text{odd}}$.
  
  \end{enumerate}
  
 \end{thm}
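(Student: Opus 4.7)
The plan is to first reduce to the case $k = \overline{\F_q}$ by spreading out: for $k = \C$, take a model of $(X, f)$ over a finitely generated subring of $\C$ and specialize at a closed point with finite residue field; Deligne's comparison between the mixed Hodge weight filtration and the $\ell$-adic weight filtration on the special fiber, together with preservation of $f^*$-eigenvalues, reduces everything to the $\overline{\F_q}$ case. Over $\overline{\F_q}$, descend $(X, f)$ to a model over some $\F_q$ so that $f$ commutes with the geometric Frobenius $F$.

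For each $m \geq 1$, introduce the zeta function of Definition~\ref{Zeta} associated to the self-map $f^m F$, which after taking logarithms gives
\[
\log Z_m(u) \;=\; \sum_{n \geq 1} \tfrac{1}{n}\,\#\mathrm{Fix}\bigl((f^m F)^n\bigr)\, u^n.
\]
This is a power series with non-negative coefficients (fixed-point counts). Fujiwara's trace formula identifies these log-coefficients with alternating cohomological traces for $n$ sufficiently large; after simultaneously triangularizing the commuting operators $f^*, F^*$ on $H^i(X, \Q_\ell)$ to produce eigenvalue pairs $(\alpha_\nu, \beta_\nu)$ of degree $i(\nu)$ and weight $k(\nu)$ (with $|\beta_\nu| = q^{k(\nu)/2}$ by Deligne), one obtains
\[
\log Z_m(u) \;=\; P_m(u) \;+\; \sum_\nu (-1)^{i(\nu)+1} \log(1 - u\, \alpha_\nu^m \beta_\nu)
\]
for a polynomial $P_m$. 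Hence $\log Z_m$ has logarithmic singularities at $u_0 = (\alpha_\nu^m \beta_\nu)^{-1}$ with integer net multiplicity $c_m(u_0) := \sum_{\nu\,:\,\alpha_\nu^m \beta_\nu = 1/u_0} (-1)^{i(\nu)+1}$.

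Let $R_m$ be the radius of convergence of $\log Z_m$. Pringsheim's theorem guarantees $u = R_m$ is a singular point, so there must exist a non-canceled positive real $c^* = 1/R_m$ at the maximum non-canceled magnitude $\max_\nu |\alpha_\nu^m \beta_\nu|$. Non-negativity of $\log Z_m$ on $[0, R_m)$ combined with the log-singular shape then forces $c_m(R_m) < 0$: even-degree $\nu$'s strictly dominate odd ones among those attaining the maximum. For $m$ sufficiently large, $|\alpha_\nu^m \beta_\nu|$ is maximized exactly when $|\alpha_\nu| = \Lambda := \max(\lambda_{\mathrm{even}}, \lambda_{\mathrm{odd}})$ and $|\beta_\nu| = q^{k_{\max}/2}$ for the associated maximum weight $k_{\max}$. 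Under the hypothesis $\lambda_{\mathrm{odd}} > \lambda_{\mathrm{even}}$, every such $\nu$ sits in odd cohomological degree, so $c_m > 0$ at each positive real $c^*$ of maximum magnitude --- contradicting $c_m(R_m) < 0$, and proving (1). Part (2), under $\lambda_{\mathrm{even}} = \lambda_{\mathrm{odd}}$ with $k_{\mathrm{odd}} > k_{\mathrm{even}}$, follows identically with $k_{\max} = k_{\mathrm{odd}}$: the only $\nu$ realizing $|\alpha_\nu| = \Lambda$ and $|\beta_\nu| = q^{k_{\mathrm{odd}}/2}$ sit in odd degree, again forcing $c_m > 0$ at the dominant singularity.

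The main technical obstacle I anticipate is uniformly applying Fujiwara's trace formula to the iterates $(f^m F)^n$, since Fujiwara's cutoff depends on the correspondence and hence the polynomial correction $P_m$ varies with $m$ (though this does not interfere with the singularity analysis since $P_m$ is entire). The spreading-out reduction for possibly singular proper $X$ in the complex case is handled by Deligne's weight formalism.
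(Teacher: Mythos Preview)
Your strategy coincides with the paper's: spread out to a proper scheme over $\F_q$, form the zeta function of $f^r\circ F$, use Fujiwara's trace formula to see that its logarithm has non-negative coefficients, simultaneously triangularize $f^*$ and $F^*$, and then let $r\to\infty$ so that the $f^*$-eigenvalue modulus eventually dominates the bounded Frobenius weight factor $q^{k/2}$. The only difference is the analytic device. The paper encapsulates positivity in Lemma~\ref{exponential_rational_function}: if $\exp(G)=R/Q$ with $G$ having non-negative coefficients, then every closed disc about the origin containing a zero of $R$ already contains a zero of $Q$; it then bounds the zeros of $Q$ away from $0$ by $1/(\lambda_{\text{even}}^r q^{\dim X})$ and exhibits a zero of $R$ of modulus at most $1/\lambda_{\text{odd}}^r$, yielding $\lambda_{\text{even}}^r q^{\dim X}\ge \lambda_{\text{odd}}^r$ for every $r$. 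You instead invoke Pringsheim's theorem to place the dominant singularity of $\log Z_m$ on the positive real axis and read off the sign of its net multiplicity. This is the same positivity fact in a different dress; your formulation has the mild advantage that it sidesteps the question of whether a specific odd inverse-eigenvalue survives in the reduced fraction $R/Q$, since Pringsheim hands you the relevant real singularity directly.

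One simplification: because $X$ is \emph{proper}, Proposition~\ref{Fujiwara_trace_fixed_point} gives the trace formula for every $n\ge 1$, so your polynomial correction $P_m$ is identically zero and the ``main technical obstacle'' you anticipate does not arise. This is precisely where the properness hypothesis is used, in the paper as in your argument.
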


The inequality in $(2)$ seems to be previously unknown even for smooth projective varieties over $\C$.~The inequality in $(1)$  extends what was previously known for smooth projective varieties over $\C$ using analytical techniques (via the Gromov-Yomdin theory) to arbitrary proper varieties over $\C$.~However the properness assumption is necessary,~as shown by Example \ref{theorem_false_non_proper}.

Theorem \ref{entropy_restriction_intro} immediately allows us to deduce the following Corollary.

\begin{cor}\label{spectral_radius_arbitrary_base_field_intro}

Let $f:X \to X$ be a self map of a proper scheme over an arbitrary field $k$.~Let $\ell$ be a prime invertible in $k$ and $\bar{k}$ an algebraic closure of $k$.~Fix an embedding $\tau: \Q_{\ell} \hookrightarrow \C$.~Then the spectral radius (with respect to $\tau$) for the action of $f_{\bar{k}}^*$ on the entire $\ell$-adic cohomology $H^*(X_{\bar{k}},\Q_{\ell})$ is equal to the spectral radius for its action on $\oplus_{ i \geq 0} H^{2i}(X_{\bar{k}},\Q_{\ell})$.

\end{cor}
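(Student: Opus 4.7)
Since $\bigoplus_i H^{2i}(X_{\bar k},\Q_\ell)$ is a direct summand of $H^*(X_{\bar k},\Q_\ell)$, the inequality $\lambda_{\text{even}}\le \lambda_{\text{total}}=\max(\lambda_{\text{even}},\lambda_{\text{odd}})$ is automatic, so the content of the corollary is the reverse inequality $\lambda_{\text{odd}}\le \lambda_{\text{even}}$. When $\bar k=\C$ or $\bar k=\overline{\F_q}$ this is already part of Theorem \ref{entropy_restriction_intro}(1); my plan is to reduce an arbitrary base field $k$ to the case $\bar k=\overline{\F_q}$ by descent and specialization.

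The first step is descent. Since $X$ is of finite type and $f$ is a morphism of such, there exist a finitely generated subfield $k_0\subseteq k$, a proper $k_0$-scheme $X_0$, and a self-map $f_0$ on it with $(X,f)\cong (X_0,f_0)\times_{k_0} k$. Proper base change applied to $\Spec\bar k\to\Spec\overline{k_0}$ produces a $\Q_\ell[f^*]$-linear isomorphism $H^*(X_{\bar k},\Q_\ell)\cong H^*(X_{0,\overline{k_0}},\Q_\ell)$, so one may replace $k$ by $k_0$ and assume $k$ is finitely generated. Then $k$ is the function field of an integral scheme $S$ of finite type over $\Spec\Z$, and by the usual spreading-out arguments one finds a proper morphism $\pi\colon \mc X\to S$ equipped with a self-map $f_S\colon \mc X\to\mc X$ over $S$, restricting to $(X,f)$ at the generic point.

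The second step is specialization. Each $R^i\pi_*\Q_\ell$ is a constructible $\Q_\ell$-sheaf on $S$, and after shrinking $S$ to a nonempty open one may further assume they are lisse; the endomorphism induced by $f_S^*$ is then an endomorphism of lisse sheaves on the connected scheme $S$. The key observation is that the characteristic polynomial of such an endomorphism on the geometric stalk is the same at every geometric point of $S$: two stalks differ only by conjugation along a path in the étale fundamental groupoid, and conjugation leaves the characteristic polynomial unchanged. Combined with proper base change this shows that for every geometric point $\bar s$ of $S$, the $f^*$-eigenvalues in $\bar\Q_\ell$, counted with multiplicity, on $H^i(\mc X_{\bar s},\Q_\ell)$ coincide with those on $H^i(X_{\bar k},\Q_\ell)$. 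Choosing $\bar s$ above a closed point $s\in S$, the residue field at $s$ is finite because $S$ is of finite type over $\Spec\Z$, so $\mc X_{\bar s}$ is a proper scheme over some $\overline{\F_q}$. Theorem \ref{entropy_restriction_intro}(1) applies to $(\mc X_{\bar s}, f_{\bar s})$ and yields $\lambda_{\text{odd}}\le\lambda_{\text{even}}$ there; transporting this via the eigenvalue matching back to $X_{\bar k}$ and then composing with $\tau$ completes the proof.

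The essential content is already Theorem \ref{entropy_restriction_intro}; my task is to package standard descent and spreading-out arguments so that no eigenvalues are lost. The one step that requires some care is the invariance of the characteristic polynomial of $f_S^*$ under specialization, together with the corresponding need to shrink $S$ until the sheaves $R^i\pi_*\Q_\ell$ are genuinely lisse so that this invariance is available in the form needed.
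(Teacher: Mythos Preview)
Your argument is correct and is precisely the ``standard spreading out argument'' that the paper invokes without detail: the paper gives no separate proof of the corollary, simply declaring it immediate from Theorem~\ref{entropy_restriction_intro}, and in the proof of the theorem itself it reduces from $k=\C$ to $k=\overline{\F_q}$ by the same mechanism you describe. Two small points worth tightening: the isomorphism $H^*(X_{\bar k},\Q_\ell)\cong H^*(X_{0,\overline{k_0}},\Q_\ell)$ is really invariance of \'etale cohomology under extension of algebraically closed fields (a consequence of smooth base change rather than proper base change proper), and when choosing the closed point $s\in S$ you should also arrange that $\ell$ is invertible on $S$ (shrink to $S\times_{\Spec\Z}\Spec\Z[1/\ell]$) so that the theorem applies at $\bar s$.
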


These constraints on the eigenvalues can be seen as a consequence of a positivity condition.~When the base field is finite,~and the variety is proper,~the logarithm of the zeta function (see Definition \ref{Zeta}) is a power series with positive coefficients and it is this positivity that we exploit (via Lemma \ref{convergence}).~The geometric Frobenius plays a dual role here,~on the one hand it ensures the above positivity (see Proposition \ref{Fujiwara_trace_fixed_point}),~and on the other hand we can `distinguish' cohomology classes using their weights.

In a recent work,~Truong (\cite{Tuyen})  among other things has shown that for a dominant self map of a smooth projective variety over an arbitrary base field the spectral radius on the $\ell$-adic cohomology is bounded above by its spectral radius on the ring of algebraic cycles mod numerical equivalence,~thus implying that the spectral radius on the even degree cohomology and the entire cohomology are the same.~We believe that the point of view taken in this article is different from the one in \cite{Tuyen}.

\subsubsection*{Acknowledgements:}

I would like to thank my advisor Prof.~Vasudevan Srinivas for his support and generosity.~I am grateful to Prof.~H\'el\`ene Esnault for reading an earlier draft of this article,~and suggesting corrections which have helped improve the overall presentation.~I would like to thank Prof.~Najmuddin Fakhruddin for enlightening comments and questions.~I would also like to thank the referee for several helpful comments and suggestions.~This work has been partly supported by SPM-07/858(0177)/2013/EMR-1.

\section{A Zeta function associated to a self map}\label{Zeta_Function_self_map}

In this section we shall introduce a \textit{zeta function} associated to a proper self map of a separated scheme of finite type over $\F_q$.~The analytic properties of this zeta function will be central to the results in Section \ref{new_constraints_eigenvalues}.

Let $\F_q$ be a finite field with $q$ elements of characteristic $p$.~Let $k$ be an algebraic closure of $\F_q$.~Let $\ell$ be a prime co-prime to $p$.

Schemes over $\F_q$ will be denoted by a sub-script $0$ (for example $X_0,~Y_0$,~etc.).~Similarly morphisms of schemes over $\F_q$ will be denoted by $f_0,~g_0$,~etc.~The corresponding object over $k$ will be denoted without a sub-script,~for example $X,~f$,~etc. .

Let $X$ be a separated scheme of finite type over $k$.~Let $H^{i}(X,\Q_{\ell})$ and $H^{i}_c(X,\Q_{\ell})$ respectively denote the $i^{\mathrm{th}}$ usual and compactly supported $\ell$-adic \'etale cohomology of $X$.

For any self map $f: X \to X$,~we define

\begin{equation}\label{trace_usual}
\text{Tr}(f^*,H^*(X,\Q_{\ell})):=\sum_{i=0}^{2 \text{dim}(X)} (-1)^{i} \text{Tr}(f^*;H^{i}(X,\Q_{\ell})) \in \Q_{\ell}.
\end{equation}

Similarly when $f$ is proper,~we define

\begin{equation}\label{trace_compactly_supported}
\text{Tr}(f^*,H^*_c(X,\Q_{\ell})):=\sum_{i=0}^{2 \text{dim}(X)} (-1)^{i} \text{Tr}(f^*;H^{i}_c(X,\Q_{\ell})) \in \Q_{\ell}.
\end{equation}

\begin{defn}[Absolute Frobenius]\label{Absolute_Frobenius}

For any scheme $X_0/\F_q$,~the \textit{absolute Frobenius} (with respect to $\F_q$) $F_{X_0,q}:X_0 \to X_0$ is the morphism which is the identity on the underlying topological space,~and for any open affine sub-scheme $U_0 \subseteq X_0$,~$F_{X_0,q}|_{U_0}$ corresponds to the ring endomorphism of $\Gamma(U_0,\sO_{U_0})$ given by $\alpha \to \alpha^q$.

\end{defn}

\begin{rmk}

The absolute Frobenius is a morphism over $\F_q$.

\end{rmk}

The following lemma is standard,~and we state it without a proof.

\begin{lem}\label{unramified_Frobenius}

Let $X_0$ be separated scheme of finite type over $\F_q$.~Then $X_0/\F_q$ is unramified (and hence \'etale) iff $F_{X_0,q}$ is an unramified morphism.

\end{lem}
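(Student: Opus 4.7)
The plan is to reformulate both sides of the equivalence as vanishing of certain sheaves of relative differentials, and then to produce a canonical isomorphism between those sheaves. Concretely, $X_0/\F_q$ is unramified iff $\Omega^1_{X_0/\F_q}=0$, and $F_{X_0,q}$ is unramified iff $\Omega^1_{X_0/X_0,\,F_{X_0,q}}=0$ (where the second copy of $X_0$ is viewed as an $X_0$-scheme via $F_{X_0,q}$). The heart of the matter will then be to identify these two sheaves.

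The tool I would use is the cotangent exact sequence attached to the composition $X_0 \xrightarrow{F_{X_0,q}} X_0 \to \Spec \F_q$, which makes sense because $F_{X_0,q}$ is a morphism over $\F_q$ (as already observed in the excerpt). This yields
$$F_{X_0,q}^*\Omega^1_{X_0/\F_q} \longrightarrow \Omega^1_{X_0/\F_q} \longrightarrow \Omega^1_{X_0/X_0,\,F_{X_0,q}} \longrightarrow 0.$$
The step that carries all the content is showing that the leftmost map vanishes: locally on an affine open $\Spec A \subseteq X_0$, it sends the pullback of $da$ to $d(a^q)=q\,a^{q-1}\,da$, which is zero in $\Omega^1_{A/\F_q}$ since $q=p^n$ and $A$ is an $\F_p$-algebra. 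Once this is in hand, the surjection on the right is an isomorphism $\Omega^1_{X_0/X_0,\,F_{X_0,q}} \xrightarrow{\sim} \Omega^1_{X_0/\F_q}$, and the two vanishing conditions coincide.

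To close off the parenthetical ``and hence \'etale'', I would note that $X_0/\F_q$ is automatically flat (any $\F_q$-module is free, as $\F_q$ is a field) and locally of finite presentation (since $X_0$ is of finite type over $\F_q$), so once unramifiedness is established étaleness comes for free. I do not anticipate any real obstacle in this argument; the only substantive content is the elementary identity $d(a^q)=0$ in characteristic $p \mid q$, which forces the first term of the cotangent sequence to degenerate. If one wished to avoid even the brief invocation of the cotangent sequence, essentially the same computation could be written directly in terms of generators and relations for $\Omega^1_{A/A,F_{X_0,q}}$, using that the relations $d(a^q)=0$ imposed by the Frobenius are already implied by the Leibniz rule in characteristic $p$.
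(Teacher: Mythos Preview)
The paper does not actually prove this lemma; it explicitly states it ``without a proof'' as a standard fact. So there is no proof in the paper to compare against.

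Your argument is correct and is one of the standard ways to establish this statement. The key point is exactly what you identify: in the cotangent sequence for $X_0 \xrightarrow{F_{X_0,q}} X_0 \to \Spec \F_q$, the map $F_{X_0,q}^*\Omega^1_{X_0/\F_q} \to \Omega^1_{X_0/\F_q}$ vanishes because locally it sends $da \otimes 1$ to $d(a^q)=q\,a^{q-1}\,da=0$ in characteristic $p\mid q$. This yields the isomorphism $\Omega^1_{X_0/\F_q}\simeq \Omega^1_{X_0/X_0,\,F_{X_0,q}}$, and the two unramifiedness conditions become the same vanishing. (A trivial cosmetic point: in your write-up the isomorphism arrow is written in the direction opposite to the one induced by the exact sequence; the map in the sequence goes $\Omega^1_{X_0/\F_q}\to \Omega^1_{X_0/X_0,\,F_{X_0,q}}$.) Your remark on the parenthetical ``and hence \'etale'' is also correct: over a field any scheme is flat, and finite type over $\F_q$ gives local finite presentation, so unramified implies \'etale.
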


\begin{defn}[Geometric Frobenius]\label{Geometric_Frobenius}

For a scheme $X_0/\F_q$ let $X$ denote the base change of $X_0$ to $k$.~The \textit{geometric Frobenius} (with respect to $\F_q$) is the morphism $F_{X,q}:X \to X$ induced from $F_{X_0,q}:X_0 \to X_0$ by base change.

\end{defn}

\begin{rmk}

The geometric Frobenius morphism is a morphism over $k$.

\end{rmk}

A scheme $X/k$ is said to be \textit{defined over $\F_q$},~if there exists a scheme $X_0/\F_q$ and an isomorphism of $k$-schemes between $X$ and $X_0 \times_{\F_q} k$.~Given any such scheme $X/k$ defined over $\F_q$ and a choice of a $\F_q$ structure as above,~the geometric Frobenius morphism $F_{X_0 \times_{\F_q} k}$ induces a self map of $X/k$.~We call this the \textit{geometric Frobenius} (with respect to $\F_q$ and $X_0$) and denote it by $F_{X,q}$. 

In a similar vein,~a diagram of schemes over $k$ is said to be defined over $\F_q$ if it is obtained as a base change of a diagram over $\F_q$.

Now suppose $f_0:X_0 \to X_0$ is a self map of a separated scheme of finite type over $\F_q$.~Let $(X,f)$ be the corresponding pair over $k$,~obtained by base change.
Let $\Gamma^t_{f_0 \circ F_{X_0,q}}$ be the transpose of the graph of $f_0 \circ F_{X_0,q}$ and $\Delta_{X_0}$ denote the diagonal embedding of $X_0$ in $X_0 \times_{\F_q} X_0$.~Note that $f_0$ and $F_{X_0,q}$ commute.

\begin{lem}\label{etale}

With notations as above,~the scheme $\Gamma^t_{f_0 \circ (F_{X_0,q})^m} \cap \Delta_{X_0}$ is  \'etale over $\F_q$ for all $m \geq 1$.

\end{lem}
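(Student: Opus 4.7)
The plan is to recognize $Z_0 := \Gamma^t_{g_0} \cap \Delta_{X_0}$, with $g_0 := f_0 \circ (F_{X_0,q})^m$, as the fixed-point subscheme of $g_0$ inside $X_0$: locally on $\Spec A \subseteq X_0$ it is $\Spec(A/I)$ with $I := (g_0^*(a) - a : a \in A)$. Because $Z_0 \hookrightarrow X_0$ is a closed immersion, $Z_0$ is of finite type over $\F_q$. Over the field $\F_q$ any finite-type unramified scheme is automatically \'etale (unramifiedness forces the local rings to be fields, with separable residue field extensions), so the task reduces to showing $\Omega^1_{Z_0/\F_q} = 0$.

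The essential observation is that $m \geq 1$ makes $q^m$ a multiple of the characteristic $p$. Using $g_0^*(a) = (f_0^* a)^{q^m}$ (which relies only on $F_{X_0,q}$ being the $q$-th power on functions and commuting with $f_0$), I compute in $\Omega^1_{A/\F_q}$:
$$d\bigl(g_0^*(a)\bigr) = q^m \,(f_0^* a)^{q^m - 1}\, d(f_0^* a) = 0,$$
so $d(g_0^*(a) - a) = -\,da$. Thus the image of the conormal map $I/I^2 \to \Omega^1_{A/\F_q} \otimes_A A/I$ already contains every $da$; as these generate the target over $A$, its cokernel $\Omega^1_{(A/I)/\F_q}$ vanishes.

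Gluing over an affine cover yields $\Omega^1_{Z_0/\F_q} = 0$, whence $Z_0/\F_q$ is unramified and therefore \'etale. There is no real obstacle here: the whole content is concentrated in the one-line remark that the absolute Frobenius has vanishing differential, forcing $dg_0 = 0$ and making the fixed-point condition $g_0(x) = x$ transverse. The remaining items --- identifying $\Gamma^t_{g_0} \cap \Delta_{X_0}$ with the fixed-point subscheme via its universal property, and upgrading unramified to \'etale over the field $\F_q$ --- are standard. One could alternatively package the conclusion through Lemma \ref{unramified_Frobenius} by checking that $F_{Z_0,q}$ is unramified, but the direct cotangent calculation above is the most economical route.
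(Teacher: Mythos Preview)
Your argument is correct. The identification of $Z_0$ with the fixed-point subscheme, the conormal computation using $d\bigl((f_0^*a)^{q^m}\bigr)=0$, and the passage from unramified to \'etale over a field are all sound.

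The paper takes a slightly different route. Rather than computing $\Omega^1_{Z_0/\F_q}$ directly, it observes that on $Z_0$ one has $g_0\circ i = i$ (this is what being the fixed locus means) and rewrites $g_0\circ i = f_0\circ F_{X_0,q}\circ i = (f_0\circ i)\circ F_{Z_0,q}$, using that the absolute Frobenius is functorial. Since $i$ is a closed immersion, so is $(f_0\circ i)\circ F_{Z_0,q}$, forcing $F_{Z_0,q}$ itself to be a closed immersion and hence unramified; Lemma~\ref{unramified_Frobenius} then gives the conclusion. So the paper's proof is exactly the alternative you mention in your last sentence. Both arguments encode the same phenomenon---the Frobenius kills differentials---but yours unpacks it at the level of K\"ahler forms, while the paper packages it structurally via the functoriality of $F_{\,\cdot\,,q}$ and the criterion of Lemma~\ref{unramified_Frobenius}. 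Your version is arguably more self-contained, since it does not rely on that auxiliary lemma; the paper's version has the mild advantage of never needing to write down a local presentation of $Z_0$.
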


\begin{proof}

If $m$ is greater than $1$,~replacing $f_0$ by $f_0 \circ (F_{X_0,q})^{m-1}$ we reduce to the case $m=1$.

Let $Z_0:=\Gamma^{t}_{f_0 \circ F_{X_0,q}} \cap \Delta_{X_0}$ and $g_0:=f_0 \circ F_{X_0,q}$.~We have a cartesian diagram 

\begin{center}

$\xymatrix{ Z_0 \ar@{^{(}->}[d]^{i}  \ar@{^{(}->}[r]^{i} & X_0 \ar@{^{(}->}[d]^{\Delta} &  \\
                   X_0 \ar@{^{(}->}[r]^{\Gamma^t_{g_0}} \ar[dr]_{g_0} & X_0 \times_{\F_q} X_0 \ar[r]^{\text{pr}_2} \ar[d]^{\text{pr}_1} & X_0 \\
                                                  & X_0                            & }$.

\end{center}

Here the morphism $\Gamma^t_{g_0}$ is the transpose of the graph of $g_0$ and $\text{pr}_i,~i=1,2$ are the projections.~The commutativity of the above diagram implies that,~$(f_0 \circ i) \circ F_{Z_0,q}=(f_0 \circ F_{X_0,q}) \circ i$,~is a closed immersion.~Hence $F_{Z_0,q}$ is a closed immersion and thus an unramified morphism.~Lemma \ref{unramified_Frobenius} now implies that $Z_0$ is unramfied (and hence \'etale) over $\F_q$.

\end{proof}

Let $f:X \to X$ be a self map of a finite type scheme over $k$.

\begin{defn}[Fixed point]\label{Fixed_point}

A closed point $x \in X$ is said to be a \textit{fixed point} of $f$ if $f(x)=x$.

\end{defn}

\begin{prop}\label{positivity_trace_smooth_proper}

Let $f_0:X_0 \to X_0$ be a self map of a smooth,~proper scheme over $\F_q$.~Then $\text{Tr}((f^m \circ F_{X,q}^n)^* ,H^*(X,\Q_{\ell}))$ is the number of fixed points of $f^m \circ F_{X,q}^n$ acting on $X$.

\end{prop}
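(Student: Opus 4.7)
The plan is to invoke the Grothendieck--Lefschetz trace formula for smooth proper varieties (which for Frobenius-twisted correspondences is a special case of the theorem of Fujiwara mentioned in the abstract): for any self map $g: X \to X$,
\[
\text{Tr}\bigl(g^{*}, H^{*}(X,\Q_\ell)\bigr) \;=\; \deg\bigl([\Gamma_{g}^{t}]\cdot[\Delta_{X}]\bigr)
\]
as an intersection number in $X\times_{k} X$. Since $X$ is smooth and proper of dimension $d:=\dim X$, both $\Gamma_{g}^{t}$ and $\Delta_{X}$ are smooth closed subschemes of codimension $d$, and the intersection class is supported on the (a priori finite) set of fixed points of $g$.

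Specializing to $g:=f^{m}\circ F_{X,q}^{n}$, apply Lemma \ref{etale} with $f_{0}$ replaced by $f_{0}^{m}$ and $m$ replaced by $n$: the $\F_{q}$-scheme $\Gamma_{g_{0}}^{t}\cap\Delta_{X_{0}}$ is \'etale. Base-changing to $k$, this intersection becomes a disjoint union of reduced closed points whose $k$-points are precisely the fixed points of $g$. At any such $(x,x)$, reducedness of the scheme-theoretic intersection translates into
\[
T_{x}\Gamma_{g}^{t}\cap T_{x}\Delta_{X} \;=\; T_{(x,x)}\bigl(\Gamma_{g}^{t}\cap\Delta_{X}\bigr) \;=\; 0,
\]
and a dimension count gives $T_{x}\Gamma_{g}^{t}\oplus T_{x}\Delta_{X} = T_{(x,x)}(X\times X)$, i.e., transversality. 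Each intersection multiplicity is therefore $1$, so the degree of $[\Gamma_{g}^{t}]\cdot[\Delta_{X}]$ equals the cardinality of the fixed point set, yielding the proposition.

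The main obstacle is the passage from the \'etaleness of the fixed-point scheme over $\F_{q}$ (Lemma \ref{etale}) to transversality of the two closed subschemes inside $X\times_{k}X$; this is essentially a tangent-space dimension count, but it uses smoothness of $X$ crucially, since without it the graph itself need not be smooth. An alternative route bypassing transversality is to compute the local Lefschetz index directly: as $F_{X,q}^{n}$ is purely inseparable for $n\geq 1$, its derivative vanishes on Zariski tangent spaces, so $d(f^{m}\circ F_{X,q}^{n})_{x}=0$ at every fixed point and $\det(\mathrm{id}-dg_{x})=1$, confirming that each fixed point contributes exactly $1$ to the trace.
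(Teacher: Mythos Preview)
Your proof is correct and follows the same approach as the paper: combine Lemma \ref{etale} with the Lefschetz trace formula (the paper's proof is the single sentence ``This is an immediate consequence of Lemma \ref{etale} and the trace formula in \cite{SGA 4.5}, Chapitre 4, Corollaire 3.7''). You have simply spelled out the step the paper leaves implicit, namely that \'etaleness of the fixed-point scheme forces transversality of $\Gamma_g^t$ and $\Delta_X$, so that each fixed point contributes $1$ to the intersection number; your alternative computation of the local index via $dF_{X,q}=0$ is also standard and correct.
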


\begin{proof}

This is an immediate consequence of Lemma \ref{etale} and the trace formula in \cite{SGA 4.5},~Chapitre 4,~Corollaire 3.7.

\end{proof}

The above proposition naturally leads to the following questions,~for an arbitrary $X_0$,~a proper $f_0$ and integers $m \geq 1$.

\begin{question}\label{Question_Fixed_point}

Is the $\ell$-adic number $\text{Tr}\left ((f \circ F_{X,q}^{m})^{*},H^*_{c}(X,\Q_{\ell}) \right )$ an integer? If yes,~then is it equal to the number of fixed points of $f \circ F_{X,q}^{m}$ acting on $X$?

\end{question}

A trace formula by Fujiwara sheds some light on these questions.

\begin{prop}\label{Fujiwara_trace_fixed_point}

Both the questions in \ref{Question_Fixed_point} have a positive answer,~if $m$ is sufficiently large.~Moreover when $X_0$ is proper any $m \geq 1$ would do.

\end{prop}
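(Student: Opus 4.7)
The plan is to apply a Lefschetz--Verdier type trace formula to the correspondence $\Gamma^t_{f_0 \circ F_{X_0,q}^m}$ and combine it with the \'etaleness input from Lemma \ref{etale}. In both cases, once the trace formula is available, Lemma \ref{etale} will ensure that the fixed-point scheme is a disjoint union of reduced points whose local indices each equal $1$, so that the trace is literally the integer number of fixed points.

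\smallskip

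\textbf{Proper case.} When $X_0$ is proper one has $H^* = H^*_c$, and $\Gamma^t_{f_0 \circ F_{X_0,q}^m}$ is a proper correspondence on $X_0$. The classical Lefschetz--Verdier trace formula then expresses
\[
\mathrm{Tr}\bigl((f \circ F_{X,q}^m)^*,\, H^*_c(X,\Q_\ell)\bigr) \;=\; \sum_z \mathrm{loc}_z,
\]
where the sum runs over the connected components $z$ of $\Gamma^t_{f_0 \circ F_{X_0,q}^m} \cap \Delta_{X_0}$ (base changed to $k$). By Lemma \ref{etale} this intersection is \'etale over $\F_q$, so every geometric point is a transverse fixed point of $f \circ F_{X,q}^m$; the local term $\mathrm{loc}_z$ of the constant sheaf at such a point is $1$, and the whole sum is the integer count of fixed points.

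\smallskip

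\textbf{General case.} When $X_0$ is only separated of finite type, the graph correspondence is no longer proper and one must rule out spurious contributions from ``fixed points at infinity''. This is exactly the content of Fujiwara's theorem (a form of Deligne's conjecture): for any correspondence on $X_0$, after composing with a sufficiently high power of $F_{X_0,q}$, the result becomes Fujiwara-contractive and the Lefschetz--Verdier formula holds on $H^*_c$ with the same local-term interpretation as above. Applied to $\Gamma^t_{f_0}$, and combined with Lemma \ref{etale} to force each local term to be $1$, this gives the desired conclusion for all sufficiently large $m$.

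\smallskip

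\textbf{Main obstacle.} Essentially all the analytic/geometric work is bundled into Fujiwara's theorem (resp.\ the classical Lefschetz--Verdier formula in the proper case). Taking these as black boxes, the only remaining verification is the standard fact that a transverse fixed point of a correspondence contributes local index $1$ for the constant sheaf $\Q_\ell$, which in our setting follows directly from the \'etaleness of the fixed-point scheme supplied by Lemma \ref{etale}.
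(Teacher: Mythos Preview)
Your proposal is correct and follows essentially the same approach as the paper: the paper's proof is simply a direct citation of Fujiwara's trace formula (\cite{Fujiwara}, Corollary 5.4.5, and \cite{Illusie}, Section 3.5 (b)), and you have unpacked that citation by separating the proper case (classical Lefschetz--Verdier) from the general case (Fujiwara's version of Deligne's conjecture) and making explicit the role of Lemma \ref{etale} in forcing each local term to equal $1$. There is no substantive difference in strategy.
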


\begin{proof}

This is an immediate consequence of Fujiwara's trace formula (see \cite{Fujiwara},~Corollary 5.4.5,~compare \cite{Illusie},~Section 3.5 (b)).

\end{proof}

Proposition \ref{Fujiwara_trace_fixed_point} motivates the following definition.

\begin{defn}\label{least_twist}

Let $n_0(f)$ be the least integer $m$ such that,~both the questions in \ref{Question_Fixed_point} have a positive answer.

\end{defn}

Now we are in a position to define the zeta function,~and study its analytical properties.

Let $X_0$ be a separated scheme of finite type over $\F_q$,~and $f_0:X_0 \to X_0$ a proper self map (also defined over $\F_q$).~Let $\ell$ be a prime invertible in $\F_q$.

For any field $K$,~let $K[[t]]$ and $K((t))$ respectively be the ring of formal power series and the field of formal Laurent series with coefficients in $K$.

\begin{defn}\label{Zeta}
The \textit{zeta function} $Z(X_0,f_0,t)$ corresponding to $(X_0,f_0)$ is defined to be 

\begin{center}

 $Z(X_0,f_0,t)=\exp(\sum_{n \geq 1}\dfrac{\text{Tr}((f \circ F_{X,q})^{n*},H^*_c(X,\Q_{\ell}))t^n}{n}) \in \Q_{\ell}[[t]] \subset \Q_{\ell}((t))$.

\end{center}

\end{defn}

\begin{rmk}

When $f_0$ is the identity,~one recovers the Hasse-Weil zeta function of the scheme $X_0$. 

\end{rmk}

\begin{lem}\label{Zeta_independent_l}

$Z(X_0,f_0,t)$ belongs to $\Q(t) \cap \Q_{\ell}[[t]] \subseteq \Q_{\ell}((t))$ and is independent of $\ell$.

\end{lem}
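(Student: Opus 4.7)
The strategy has three components: (a) establish rationality of $Z$ over $\Q_\ell$ via the Grothendieck--Lefschetz trace formula, (b) use Proposition~\ref{Fujiwara_trace_fixed_point} to show that the tail of the defining exponent is a sequence of rational integers independent of $\ell$, and (c) propagate this rationality to $Z$ itself by combining the Hankel-rank criterion with a uniqueness argument for the formal differential equation $y' = R y$.

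Applying the formal identity $\exp\bigl(\sum_n \text{Tr}(T^n)\,t^n/n\bigr) = \det(1 - tT)^{-1}$ to each $H^i_c(X,\Q_\ell)$ with the alternating sign of Definition~\ref{Zeta} yields
\[
Z(X_0, f_0, t) = \prod_{i=0}^{2\dim X} \det\bigl(1 - t(f \circ F_{X,q})^* \mid H^i_c(X,\Q_\ell)\bigr)^{(-1)^{i+1}} \in \Q_\ell(t).
\]
Its logarithmic derivative $R := Z'/Z$, being a sum of logarithmic derivatives of polynomials, when written in lowest terms $R = P/Q$ satisfies $\deg P < \deg Q$. Since $f$ and $F_{X,q}$ commute, $(f \circ F_{X,q})^n = f^n \circ F_{X,q}^n$, so applying Proposition~\ref{Fujiwara_trace_fixed_point} to the self-map $f^n$ with Frobenius-twist $m = n$ gives, for all $n$ sufficiently large, that $a_n := \text{Tr}((f \circ F_{X,q})^{n*}, H^*_c)$ equals the number of fixed points of $f^n \circ F_{X,q}^n$, a rational integer independent of $\ell$.

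To deduce $R \in \Q(t)$ and its $\ell$-independence one runs the standard Hankel-rank argument. Normalizing $Q(0) = 1$, the Taylor coefficients of $R$ satisfy a linear recurrence with coefficients from $Q$; their eventual rationality forces the shifted Hankel matrix to have rational entries, and its one-dimensional left kernel is therefore $\Q$-rational, giving $Q \in \Q[t]$. Running this recurrence backward --- possible because $q_{\deg Q} \neq 0$ and unobstructed because $\deg P < \deg Q$ --- rationalizes all remaining Taylor coefficients of $R$, whence $P = QR \in \Q[t]$. Since only the tail of $(a_n)$ enters the argument, $R$ is likewise independent of $\ell$.

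Finally, the equation $y' = R y$ with $y(0) = 1$ has a unique formal power series solution whose Taylor coefficients are determined recursively from those of $R$, and this solution lies in $\Q[[t]]$ because $R \in \Q[[t]]$. Since $Z \in \Q_\ell[[t]]$ is such a solution, it agrees with this $\Q$-rational series, so $Z \in \Q[[t]] \cap \Q_\ell(t)$; the classical equality $K[[t]] \cap L(t) = K(t)$ for fields $K \subseteq L$ then yields $Z \in \Q(t)$. Independence of $\ell$ descends from that of $R$ by the same uniqueness. The main technicality is verifying the hypothesis of Proposition~\ref{Fujiwara_trace_fixed_point} for the self-maps $f^n$ with twist $m = n$ for all $n$ sufficiently large; intuitively the Frobenius-contraction dominates the Fujiwara threshold, but this point warrants care.
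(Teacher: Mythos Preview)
Your argument takes a different and more elaborate route than the paper's. The paper cites Illusie directly to get that \emph{every} coefficient $a_n=\text{Tr}((f\circ F_{X,q})^{n*},H^*_c)$ lies in $\Q$ and is independent of $\ell$, so $Z\in\Q[[t]]$ outright; combining this with $Z\in\Q_\ell(t)$ (trace--determinant) and the inclusion $\Q[[t]]\cap\Q_\ell(t)\subseteq\Q(t)$ finishes in three lines. You instead try to extract only the \emph{tail} of $(a_n)$ from Proposition~\ref{Fujiwara_trace_fixed_point}, and then push rationality back to the initial terms via the Hankel criterion on $R=Z'/Z$ and the ODE $y'=Ry$. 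That recovery mechanism is sound: $\deg P<\deg Q$ does hold for the logarithmic derivative of an alternating product of characteristic polynomials, the shifted Hankel kernel is $\Q$-rational once the tail is, backward recursion (legitimate because $q_{\deg Q}\neq 0$ and $\deg P<\deg Q$) then rationalises every $a_n$, and the ODE step is unproblematic.

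The genuine issue is exactly the one you flag at the end. Proposition~\ref{Fujiwara_trace_fixed_point} gives, for each \emph{fixed} proper self-map $g$, a threshold $n_0(g)$ such that the trace formula holds for $g\circ F_{X,q}^m$ whenever $m\ge n_0(g)$. You apply this with $g=f^n$ and $m=n$, so you need $n\ge n_0(f^n)$ for all large $n$. Nothing in the paper supplies this: the closing paragraph of the article announces only that $n_0(f^r)$ admits an upper bound growing at most \emph{linearly} in $r$ --- and that as future work --- and a linear bound with slope exceeding $1$ would defeat your argument. When $X_0$ is proper, Proposition~\ref{Fujiwara_trace_fixed_point} gives $n_0\equiv 1$ for every self-map and your proof goes through; but the lemma is asserted for arbitrary separated $X_0$ with $f_0$ merely proper. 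The paper sidesteps the whole difficulty by invoking the sharper statement in Illusie rather than its own Proposition~\ref{Fujiwara_trace_fixed_point}.
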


\begin{proof}

The result in \cite{Illusie},~Section 3.5 (b) implies that the above power series is actually in $\Q[[t]]$ and is independent of $\ell$.~The trace-determinant relation (\cite{Deligne-Weil I},~1.5.3) implies that $Z(X_0,f_0,t)$ is a rational function in $\Q_{\ell}$.~Hence $Z(X_0,f_0,t) \in \Q_{\ell}(t) \cap \Q[[t]] \subset \Q_{\ell}[[t]]$ and hence in $\Q(t)$ (see \cite {Deligne-Weil I},~Lemme 1.7).~Thus $Z(X_0,f_0,t) \in \Q(t)$ and is independent of $\ell$.
 \end{proof}
 
 \begin{cor}\label{radius_convergence}
 
 The formal power series $Z(X_0,f_0,t) \in \Q[[t]] \subseteq \C[[t]]$ has a non-trivial radius of convergence about the origin in the complex plane and has a meromorphic continuation onto the entire complex plane as a rational function.
 
 \end{cor}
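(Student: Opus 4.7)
The plan is to extract everything from Lemma \ref{Zeta_independent_l}, which does the heavy lifting: it gives us that $Z(X_0,f_0,t) \in \Q(t) \cap \Q_\ell[[t]]$. So $Z(X_0,f_0,t)$ is literally a rational function with rational coefficients when expanded about the origin.

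The first step is to write $Z(X_0,f_0,t) = P(t)/Q(t)$ for polynomials $P,Q \in \Q[t]$ with $Q \neq 0$, which we may assume have no common factor. Since by the exponential definition $Z(X_0,f_0,0) = \exp(0) = 1$, we have $Q(0) \neq 0$, so $P/Q$ is holomorphic in a neighborhood of the origin in $\C$. This immediately shows that the formal power series has a nontrivial radius of convergence about the origin, equal to the distance from $0$ to the nearest zero of $Q$ in $\C$.

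For the meromorphic continuation, the same representation $P/Q$ \emph{is} a meromorphic function on all of $\C$: its only singularities are the finitely many zeros of $Q$, which are poles of $P/Q$. This meromorphic function on $\C$ agrees, on the disc of convergence around $0$, with the analytic function defined by the power series expansion of $Z(X_0,f_0,t)$. So it provides the desired meromorphic continuation as a rational function.

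There is no real obstacle here; the corollary is essentially a restatement of the fact that a rational function with $Q(0)\neq 0$ is holomorphic near $0$ and meromorphic on $\C$. The only thing one might want to double-check is that $Q(0) \neq 0$, which follows from the normalization $Z(X_0,f_0,0) = 1$ forced by the exponential definition in Definition \ref{Zeta}.
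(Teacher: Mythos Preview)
Your proof is correct and takes essentially the same approach as the paper's, which simply invokes Lemma~\ref{Zeta_independent_l} to identify $Z(X_0,f_0,t)$ with the power series expansion of a rational function in $\Q(t)$ and concludes. You are just more explicit than the paper in verifying $Q(0)\neq 0$ via the normalization $Z(X_0,f_0,0)=1$.
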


\begin{proof}

It follows from Lemma \ref{Zeta_independent_l} that,~the formal power series $Z(X_0,f_0,t)$ coincides with the power series expansion (about the origin) of a rational function with coefficients in $\Q$,~as elements of $\Q[[t]]$.~Hence the result.

\end{proof}

\begin{rmk}\label{Zeta_constant_map}
Suppose $X_0$ is geometrically connected and $f_0:X_0 \to X_0$ a constant map (necessarily mapping to a $\F_q$-valued point).~The associated zeta function $Z(X_0,f_0,t)$ is $\frac{1}{1-t} \in \Q[[t]]$.~We shall see in the next section that even when $f_0$ is not dominant,~the zeta function associated to it still carries enough information for us to use.~This is in contrast to the techniques in \cite{Esnault-Srinivas},~where $f_0$ being an automorphism was crucially used.
\end{rmk}

\section{Some new constraints on the eigenvalues}\label{new_constraints_eigenvalues} 

In this section we establish some new constraints on the eigenvalues for the action of a self map of a proper variety on its $\ell$-adic cohomology.

We continue using the notations and conventions of Section \ref{Zeta_Function_self_map}.
 
 \begin{lem}\label{convergence}
 
 Let $G(t) \in t\Q[[t]]$ be a formal power series with non-negative coefficients and with no constant term.~Then the formal power series $\exp(G(t)) \in \Q[[t]]$ and $G(t)$ have the same radius of convergence about the origin in the complex plane.~In particular,~in the disc of its convergence,~the formal power series $\exp(G(t))$,~considered as a holomorphic function,~coincides with the exponential (in the analytic sense) of a holomorphic function.
 
 \end{lem}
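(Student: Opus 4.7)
The plan is to write $G(t) = \sum_{n \geq 1} a_n t^n$ with $a_n \geq 0$ and to set $H(t) := \exp(G(t)) = \sum_{n \geq 0} b_n t^n$, denote their radii of convergence by $R(G)$ and $R(H)$, and prove $R(G) = R(H)$ by two independent inequalities.

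For $R(H) \geq R(G)$, positivity is not needed. If $|t_0| < R(G)$, then $G$ converges to a holomorphic function on a disc around $0$ containing $t_0$, and since the analytic exponential is entire, $\exp \circ G$ is holomorphic on the same disc. The standard fact that formal and analytic composition of power series coincide whenever everything converges implies that the Taylor expansion at the origin of this holomorphic function equals the formal series $H(t)$. Hence $H$ converges on the disc $|t| < R(G)$, yielding both $R(H) \geq R(G)$ and the ``in particular'' assertion.

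For the reverse inequality $R(G) \geq R(H)$, the non-negativity hypothesis enters crucially. From the identity $H = \sum_{k \geq 0} G^k/k!$ together with $a_n \geq 0$, every $G^k$ has non-negative coefficients, so $b_n \geq 0$; isolating the $k=1$ summand moreover gives $b_n \geq a_n$ for all $n \geq 1$. A power series with non-negative coefficients has radius of convergence equal to the supremum of real $r \geq 0$ at which it converges, so for any real $0 < r < R(H)$ the termwise bound $\sum_{n \geq 1} a_n r^n \leq \sum_{n \geq 0} b_n r^n < \infty$ shows that $G$ converges at $r$. Letting $r \to R(H)$ gives $R(G) \geq R(H)$.

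There is no serious obstacle here: the argument reduces to the interplay between formal and analytic operations on power series together with termwise majorization on the positive real axis, and non-negativity is precisely what allows the coefficientwise bound $a_n \leq b_n$ to upgrade to a comparison of radii.
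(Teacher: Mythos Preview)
Your proof is correct and follows the same overall strategy as the paper: both directions of the inequality $R(G)=R(H)$ are established separately, the direction $R(H)\geq R(G)$ being immediate from holomorphy of $\exp\circ G$, and the direction $R(G)\geq R(H)$ reducing to the coefficientwise bound $a_n\leq b_n$.

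The difference lies in how that coefficientwise bound is obtained. The paper proves it by an inductive Fa\`a di Bruno–type argument, showing that $\frac{d^n}{dt^n}\exp(G(t))=P_n(G^{(1)},\dots,G^{(n)})\exp(G(t))$ with $P_n$ a polynomial with positive integral coefficients of the form $x_n+\tilde P_n(x_1,\dots,x_{n-1})$, and then evaluating at $t=0$. Your argument is more direct: you simply expand $\exp(G)=\sum_{k\geq 0}G^k/k!$ (well-defined since $G$ has no constant term), observe that each $G^k$ has non-negative coefficients, and isolate the $k=1$ summand to read off $b_n\geq a_n$. This bypasses the induction entirely and is cleaner; the paper's derivative computation ultimately encodes the same combinatorics but with more bookkeeping.
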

 
 \begin{proof}
 
It is clear that the radius of convergence of the formal power series $\exp(G(t))$ is at least as large as that of the formal power series $G(t)$,~subject to the latter having a non-trivial radius of convergence.~To complete the proof it suffices to show that the radius of convergence of the formal power series $\exp(G(t))$ is bounded above by the radius of convergence of $G(t)$. 
 
Using the comparison test for convergence it suffices to show that,

\begin{center}
$\frac{d^n}{dt^n}(\exp(G(t)))|_{t=0} \geq G^{(n)}(0)$,
\end{center}

\noindent where $G^{(n)}(t)$ is the $n^{\rm th}$ formal derivative of $G(t)$ (\cite{Rudin_Real_Complex_Analysis},~10.6,~(8)).~We shall show by induction on $n$ that,

\begin{center}

$\frac{d^n}{dt^n}(\exp(G(t))=P_n(G^{(1)}(t),G^{(2)}(t),~\cdots ,G^{(n)}(t)) \exp(G(t))$ in $\Q[[t]]$,

\end{center}

\noindent where $P_n(x_1,x_2,\cdots,x_n)$ is a polynomial with \textit{positive integral} coefficients and is of the form $P_n(x_1,x_2,\cdots,x_n)=x_n+\tilde{P}_{n}(x_1,x_2,\cdots,x_{n-1})$ for some polynomial $\tilde{P}_{n}$ in one less variable.
 
 For $n=1$ the statement is obviously true.~Assume now that the statement is true with $n=k$,~the chain rule of differentiation then implies the statement for $n=k+1$.~In particular one observes that
 
 \begin{center}
 
 $\frac{d^n}{dt^n}(\exp(G(t)))|_{t=0}=G^{(n)}(0)+\tilde{P}_{n}(G^{(1)}(0),G^{(2)}(0),\cdots,G^{(n-1)}(0))$.

 \end{center}

The non-negativity of the coefficients of $G(t)$ implies the non-negativity of $G^{(n)}(0)$.~Since the coefficients of the polynomial $\tilde{P}_{n}$ are positive we are done.
 
 \end{proof}
 
 \begin{lem}\label{exponential_rational_function}
Let $R(t),Q(t) \in \C[t]$ be polynomials with complex coefficients.~Let $G(t) \in t\Q[[t]]$ be a formal power series with non-negative coefficients and with no constant term such that \begin{equation}\label{exponential_rational_function_1}
\exp(G(t))=\frac{R(t)}{Q(t)},
\end{equation}
as elements in $\C[[t]]$.

Then any closed disc around the origin in the complex plane containing a root of $R(t)$ necessarily contains a root of $Q(t)$.
 \end{lem}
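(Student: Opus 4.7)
The plan is to argue by contradiction: assume that $Q(t)$ has no root in the closed disc $\bar{D}(0,r)$ and derive the contradiction that $R(t)$ then cannot have a root there either. The central device will be Lemma~\ref{convergence}, which lets me transfer information about the radius of convergence of the rational function $R(t)/Q(t)$ back to the logarithm $G(t)$, thereby promoting the formal power-series identity $\exp(G(t)) = R(t)/Q(t)$ to an honest equality of holomorphic functions on a neighborhood of $\bar{D}(0,r)$.

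First, since $Q$ has only finitely many roots, the assumption that none of them lies in $\bar{D}(0,r)$ furnishes an $\epsilon > 0$ such that $Q$ is non-vanishing on the open disc $D(0,r+\epsilon)$. Consequently the rational function $R(t)/Q(t)$ is holomorphic on $D(0,r+\epsilon)$, and its Taylor expansion about the origin --- which by hypothesis coincides with $\exp(G(t))$ as a formal power series --- has radius of convergence at least $r+\epsilon$.

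Next, I would invoke Lemma~\ref{convergence}, whose hypotheses on $G(t)$ are precisely those provided, to conclude that $G(t)$ itself has radius of convergence at least $r+\epsilon$ and thus defines a holomorphic function on $D(0,r+\epsilon)$. The analytic function $\exp(G(t))$ is then defined and non-vanishing on this disc. Since it agrees with the rational function $R(t)/Q(t)$ as a formal power series at the origin and both are holomorphic on the connected domain $D(0,r+\epsilon)$, the identity principle forces agreement throughout. Hence $R(t)/Q(t)$ is non-vanishing on $D(0,r+\epsilon)$; since $Q$ is already non-vanishing there, $R$ cannot vanish on $D(0,r+\epsilon)$ either, contradicting the existence of a root of $R$ inside $\bar{D}(0,r) \subset D(0,r+\epsilon)$.

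The principal obstacle is conceptual rather than computational: Lemma~\ref{convergence} is a statement about radii of convergence of formal power series, while the desired conclusion is about zeros of polynomials, and the bridge between the two is the identity principle combined with the fact that $\exp$ has no zeros. A minor implicit point is that $G(0)=0$ forces $R(0)=Q(0)\neq 0$, so the origin is neither a pole nor a zero of $R/Q$ and the Taylor expansion at the origin is meaningful.
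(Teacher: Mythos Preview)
Your argument is correct and follows essentially the same route as the paper's proof: take the contrapositive, enlarge the closed disc slightly to an open disc on which $Q$ still has no zero so that $R/Q$ is holomorphic there, invoke Lemma~\ref{convergence} to make $G$ holomorphic on that disc, and conclude that $R/Q=\exp(G)$ is nowhere zero. The only differences are cosmetic---you spell out the $\epsilon$-enlargement and the appeal to the identity principle, which the paper leaves implicit.
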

 
 \begin{proof}
First note that Equation (\ref{exponential_rational_function_1}) implies that $\exp(G(t))$ has a non trivial radius of convergence.~Let $\Delta$ be a closed disc around the origin not containing a root of $Q(t)$.~Then the equality in (\ref{exponential_rational_function_1}) implies that $\exp(G(t))$ is convergent in an open neighbourhood of $\Delta$.

Moreover since $G(t)$ has non-negative coefficients,~Lemma \ref{convergence} and Equation (\ref{exponential_rational_function_1}) together imply that the rational function $\frac{R(t)}{Q(t)}$ equals the exponential of a holomorphic function on $\Delta$,~and hence $R(t)$ cannot have a root in $\Delta$.
  
 \end{proof}
 
Let $k$ be either an algebraic closure of a finite field or the field of complex numbers $\C$.~Let $X/k$ be a proper scheme.

Fix a prime $\ell$ invertible in $k$ (if $\text{char}(k)>0$),~and an embedding
 
  \begin{equation}\label{l-adic_complex_embedding}
  \tau: \Q_{\ell} \hookrightarrow \C.
  \end{equation}

 Let $f:X \to X$ be a self map of $X$ (over $k$).~Let $\lambda_{even},~\lambda_{\text{odd}}$,~$k_{\text{even}}$,~$k_{\text{odd}}$ be as defined in the introduction.~Then one has the following result.
  
 \begin{thm}\label{entropy_restriction}

  \begin{enumerate}
  
 \item[(1)] $\lambda_{\text{even}} \geq \lambda_{\text{odd}}$.
  
  \item[(2)] If equality holds in $(1)$,~then $k_{\text{even}} \geq k_{\text{odd}}$.
  
  \end{enumerate}
  
 \end{thm}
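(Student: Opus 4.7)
The plan is to first reduce to the case $k = \overline{\F_p}$, and then to analyze the zeta function $Z(X_0, f_0^N, t)$ for a parameter $N \geq 1$, applying Lemma~\ref{exponential_rational_function} to its factorization into a numerator built from odd cohomology and a denominator built from even cohomology.

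For the reduction from $k = \C$, I would spread out the pair $(X,f)$ to a family $(\mathcal{X}, f_{\mathcal{X}})$ over a finitely generated subring $R \subset \C$, shrinking $\Spec R$ if necessary so that the higher direct images $R^i p_* \Q_\ell$ are lisse, and then specialize to a closed point $s \in \Spec R$ with finite residue field. Proper base change produces $f^*$-equivariant isomorphisms between $H^*(X(\C), \Q_\ell)$ and $H^*(X_{\bar s}, \Q_\ell)$, and Deligne's comparison between the Hodge-theoretic and $\ell$-adic weight filtrations ensures that the four invariants $\lambda_{\text{odd}}, \lambda_{\text{even}}, k_{\text{odd}}, k_{\text{even}}$ are preserved. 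Thus I may assume that $(X,f)$ comes from an $\F_q$-model $(X_0, f_0)$ with $X_0$ proper.

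Over $\F_q$, the rationality of the zeta function yields the factorization
\[
Z(X_0, f_0^N, t) \;=\; \prod_{i,k} \det\bigl(1 - (f^N \circ F_{X,q})^* \, t \bigm| \text{Gr}^k_W H^i(X, \Q_\ell)\bigr)^{(-1)^{i+1}} \;=\; \frac{R(t)}{Q(t)},
\]
where $R$ (resp.~$Q$) collects odd-degree (resp.~even-degree) contributions. Since $X_0$ is proper, Proposition~\ref{Fujiwara_trace_fixed_point} shows that $\log Z(X_0, f_0^N, t)$ has non-negative integer coefficients, so Lemma~\ref{exponential_rational_function} applies: the smallest modulus of a root of $R$ is at least the smallest modulus of a root of $Q$, equivalently, the maximum $\tau$-absolute value of an eigenvalue of $(f^N \circ F_{X,q})^*$ on odd cohomology is at most the corresponding maximum on even cohomology. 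Because $f^*$ and $F_{X,q}^*$ commute and preserve the weight filtration, and because $F_{X,q}^*$ acts on $\text{Gr}^k_W H^i$ with eigenvalues of $\tau$-absolute value $q^{k/2}$ (Deligne's theorem of the weights), every eigenvalue of $(f^N \circ F_{X,q})^*$ on $\text{Gr}^k_W H^i$ has absolute value $|\lambda|^N q^{k/2}$ for some eigenvalue $\lambda$ of $f^*$ on that same graded piece.

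For $N$ large enough that the exponential factor $|\lambda|^N$ dominates the bounded factor $q^{k/2}$, the maximum on the odd side equals $\lambda_{\text{odd}}^N \, q^{k_{\text{odd}}/2}$ and the maximum on the even side equals $\lambda_{\text{even}}^N \, q^{k_{\text{even}}/2}$, by the very definitions of the four invariants. Hence for all $N \gg 0$ one has $\lambda_{\text{odd}}^N \, q^{k_{\text{odd}}/2} \leq \lambda_{\text{even}}^N \, q^{k_{\text{even}}/2}$; extracting $N$-th roots and letting $N \to \infty$ proves $(1)$, while in case of equality there, cancelling the $\lambda^N$ factors immediately yields $(2)$. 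The main difficulty I anticipate lies in the characteristic-zero reduction, specifically in establishing $f^*$-equivariantly that the mixed Hodge weight filtration on the Betti cohomology of the possibly singular proper complex variety $X(\C)$ matches the $\ell$-adic weight filtration on a suitable good reduction; over $\F_q$, the core positivity-plus-purity argument is a clean combination of Lemma~\ref{exponential_rational_function} with Deligne's theorem of the weights.
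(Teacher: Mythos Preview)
Your proposal is correct and follows essentially the same route as the paper: spread out to a finite field, invoke Fujiwara's trace formula (Proposition~\ref{Fujiwara_trace_fixed_point}) to get non-negativity of $\log Z(X_0,f_0^N,t)$, apply Lemma~\ref{exponential_rational_function} to the factorization of the zeta function, use the commutativity of $f^*$ and $F_{X,q}^*$ together with Deligne's weight bounds, and let the iterate $N\to\infty$. Your write-up is in fact slightly more streamlined than the paper's, since you apply Lemma~\ref{exponential_rational_function} directly to the unreduced numerator/denominator $\prod_{i\ \mathrm{odd}}P_i$ and $\prod_{i\ \mathrm{even}}P_i$ (the lemma does not require coprimality) and thereby obtain $\lambda_{\text{odd}}^Nq^{k_{\text{odd}}/2}\le\lambda_{\text{even}}^Nq^{k_{\text{even}}/2}$ in one stroke, whereas the paper first proves (1) with the cruder Frobenius bound $q^{\dim X}$ and then redoes the estimate with weights for (2); you also correctly flag the comparison of Hodge-theoretic and $\ell$-adic weight filtrations in the spreading-out step as the one point requiring care, which the paper passes over as ``standard''.
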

 
 \begin{proof}
 
Using a standard spreading out argument we can reduce to the case when $k$ is an algebraic closure of a finite field.~Moreover we fix a model $(X_0,f_0)$ for the pair $(X,f)$ over a finite sub field $\F_q$ of $k$.

Note that for any positive integer $r$,~the spectral radius (with respect to $\tau$) of $f^{r*}$ acting on the odd and even degree cohomology are $\lambda^r_{\text{odd}}$ and $\lambda^r_{\text{even}}$ respectively.~There exists an iterate of $f$ which maps a connected component of $X$ into itself.~Hence $f^*$ acting on $H^0(X,\Q_{\ell})$ has at least one eigenvalue of modulus $1$ (with respect to any $\tau$).~Thus $\lambda_{\text{even}} \geq 1$.~If $\lambda_{\text{odd}}=0$ we are done,~hence we can assume that $\lambda_{\text{odd}} \neq 0$.

Fix a positive integer $r$ and consider the zeta function (see Definition \ref{Zeta}) of the pair $(X_0,f_0^r)$.~It follows from Corollary \ref{radius_convergence} that this zeta function is defined as a holomorphic function in a non-trivial neighbourhood of the origin,~and has a  meromorphic continuation onto the entire complex plane as a rational function.~Suppose that $\frac{R(t)}{Q(t)}$ is the meromorphic continuation,~where $R(t)$ and $Q(t)$ are co-prime rational polynomials in $t$.~Moreover using the trace-determinant relation (see \cite{Deligne-Weil I},~1.5.3),~one has
 
 \begin{equation}\label{Zeta_trace_determinant}
 Z(X_0,f_0^r,t)=\prod_{i=0}^{2\text{dim}(X)} P_{i}(t)^{(-1)^{i+1}} \text{in} \hspace{0.2cm} \Q_{\ell}[[t]] \subset \C[[t]] (\text{via} \hspace{0.2cm} \tau) 
 \end{equation}
 
 where $P_i(t)=\text{det} \left(1-t \left(F_{X,q} \circ f^r \right)^{*},H^{i} \left( X, \Q_{\ell} \right) \right),~0 \leq i \leq 2 \, \text{dim}(X)$.\\

Moreover one also has

\begin{equation}\label{Zeta_meromorphic}
Z(X_0,f_0^r,t)=\frac{R(t)}{Q(t)} \hspace{0.2cm} \text{in} \hspace{0.2cm} \Q[[t]].
\end{equation}

Hence (\ref{Zeta_trace_determinant}) and (\ref{Zeta_meromorphic}) imply that the complex roots of $R(t)$ and $Q(t)$ are a subset of the \textit{inverse eigenvalues} of $(F_{X,q} \circ f^r)^*$ acting on the odd and even degree cohomology,~respectively.~In particular,~they are non-zero.~Also note that $f^*$ and $F_{X,q}^*$ commute and hence can be simultaneously brought to a Jordan canonical form.~Hence any eigenvalue of $F_{X,q}^* \circ f^{r*}$ acting on any $H^i(X,\Q_{\ell})$ is a product of an eigenvalue of $F_{X,q}^*$ and one of $f^{r*}$ acting on the same $H^i(X,\Q_{\ell})$. 

Let $\alpha$ be any complex root of $\prod_{0 \leq i \leq \text{dim}(X)} P_{2i}(t) \in \C[t]$ (via $\tau$ in (\ref{l-adic_complex_embedding})).~Since the $i^{\rm th}$ compactly supported $\ell$-adic cohomology is of weight less than or equal to $i$ (\cite{Deligne-Weil II},~Th\'eor\`eme 1 (3.3.1)),~one has,

\begin{equation}\label{even_root_bound}
\frac{1}{|\alpha|} \leq \lambda^r_{\text{even}}q^{\text{dim}(X)}.
\end{equation}

In particular,~\em $Q(t)$ has no roots on the closed disc of radius $\frac{1}{q^{\text{dim}(X)}\lambda^r_{\text{even}}}$ \em.

Since $\lambda^r_{\text{odd}}$ is the spectral radius (with respect to $\tau$) for $f^{r*}$ acting on the oddly graded cohomology,~there exists an odd index $2i+1$ and a complex root $\beta$ of $P_{2i+1}(t)$ such that,

\begin{equation}\label{odd_root_bound}
|\beta| = \frac{1}{\lambda_{\text{odd}}^rq^{m(\beta)}} \leq  \frac{1}{\lambda_{\text{odd}}^r},
\end{equation}

\noindent where $2m(\beta)$ is a non-negative integer (corresponding to the weight of the Frobenius). In particular, $\beta$ is not a root of $\prod_{0 \leq i \leq \text{dim}(X)} P_{2i}(t) \in \C[t]$. Hence $\beta$ is a root of $R(t)$.~Thus \em $R(t)$ has a root in the closed disc of radius $\frac{1}{\lambda^r_{\text{odd}}}$ \em.

It follows from Proposition \ref{Fujiwara_trace_fixed_point} that the zeta function $Z(X_0,f_0^k,t)$ is of the form $\exp(G(t))$, where $G(t) \in t\Q[[t]]$ has non-negative coefficients.~Thus Lemma \ref{exponential_rational_function} implies that

\begin{center}
$\frac{1}{\lambda_{\text{odd}}^{r}} \geq \frac{1}{q^{\text{dim}(X)}\lambda^r_{\text{even}}}$,
\end{center}

\noindent and hence

\begin{center}
$\lambda^{r}_{\text{even}}q^{\text{dim}(X)} \geq \lambda_{\text{odd}}^{r}$.
\end{center}
 
Since $r$ was an arbitrary positive integer this implies 

\begin{center}
$\lambda_{\text{even}} \geq \lambda_{\text{odd}}$.
\end{center}
 
Now suppose $\lambda_{\text{even}}=\lambda_{\text{odd}}$.~We shall prove that $k_{\text{even}} \geq k_{\text{odd}}$.

Let $\mu_i$ be the spectral radius (with respect to $\tau$ in (\ref{l-adic_complex_embedding})) for the action of  $f^*$ on $H^{2i}(X,\Q_{\ell})$ for each $0 \leq i \leq \text{dim}(X)$.~Then there exists an integer $r >> 0$ such that any integer $i \in [0,~\text{dim}(X)]$ with $\mu_i \neq \lambda_{\text{even}}$,~we have

\begin{equation}\label{basic_inequality_weights}
\mu_i^r q^{\text{dim}(X)} < \lambda^r_{\text{even}} \leq \lambda^r_{\text{even}} q^{\frac{k_{\text{even}}}{2}}.
\end{equation}

As before Lemma \ref{Zeta_independent_l} and Corollary \ref{radius_convergence} imply that the zeta function $Z(X_0,f_0^r,t)$ has a non-trivial radius of convergence about the origin and has a meromorphic continuation of the form $R(t)/Q(t)$ onto the entire complex plane,~with $R(t)$ and $Q(t)$ being co-prime rational polynomials.~Also the zeroes of $R(t)$ and $Q(t)$,~are a subset of the inverse eigenvalues of $(F_{X,q} \circ f^r)^*$ acting on the odd and even degree cohomology respectively.

Since the $i^{\rm th}$ compactly supported $\ell$-adic cohomology is of weight less than or equal to $i$ (\cite{Deligne-Weil II},~Th\'eor\`eme 1 (3.3.1)),~the weight filtration on $H^i(X,\Q_{\ell})$ satisfies $W_kH^i(X,\Q_{\ell})=H^i(X,\Q_{\ell})$ for $k \geq i$.

Let 

\begin{center}

$P_{i,k}(t):=\text{det}(1-t(F_{X,q} \circ f^r)^{*},\text{Gr}^{k}_WH^{i}(X, \Q_{\ell})),~0 \leq k \leq i,~0 \leq i \leq 2 \, \text{dim}(X)$.

\end{center}

As before,~

\begin{center}

$P_i(t):=\text{det}(1-t(F_{X,q} \circ f^r)^{*},H^{i}(X, \Q_{\ell})),~0 \leq i \leq 2 \, \text{dim}(X)$.

\end{center}

Since the weight filtration is respected by the action of $F_{X,q}^* \circ f^{r*}$ (see \cite{Deligne-Weil II},~Section 5.3.6),~one has an equality, 

\begin{center}

$\prod_{i=0}^{\text{dim}(X)} \prod_{k=0}^{2i} P_{2i,k}(t)=\prod_{0 \leq i \leq \text{dim}(X)} P_{2i}(t) \in \C[t]$ (via $\tau$ in (\ref{l-adic_complex_embedding})).

\end{center}

Let $\alpha$ be any complex zero of $\prod_{0 \leq i \leq \text{dim}(X)} P_{2i}(t) \in \C[t]$.~Then $\alpha$ is a zero of $P_{2i,k}(t)$ for some integer $i \in [0,\text{dim}(X)]$ and $0 \leq k \leq 2i$.~If $\mu_i \neq \lambda_{\text{even}}$,~then (\ref{basic_inequality_weights}) implies,~

\begin{equation}
\frac{1}{|\alpha|} \leq \mu_i^r q^{\frac{k}{2}} \leq \mu_i^r q^{\text{dim}(X)}.
\end{equation}

On the other hand the definition of $k_{\text{even}}$ implies that,~if $\mu_i=\lambda_{\text{even}}$,~then $k \leq k_{\text{even}}$.~Thus  (\ref{basic_inequality_weights}) implies that for any complex root $\alpha$ of  $\prod_{0 \leq i \leq \text{dim}(X)} P_{2i}(t)$,

\begin{equation}
\frac{1}{|\alpha|} \leq \lambda_{\text{even}}^r q^{\frac{k}{2}} \leq \lambda_{\text{even}}^r q^{\frac{k_{\text{even}}}{2}}.
\end{equation}

In particular the same holds true for any complex root of $Q(t)$.

Moreover from the definition of $k_{\text{odd}}$ it follows that there exist an odd index $2i+1$ and a complex root $\beta$ of $P_{2i+1,k_{\text{odd}}}(t)$ such that,~$\frac{1}{|\beta|}=\lambda_{\text{odd}}^r q^{\frac{k_{\text{odd}}}{2}}=\lambda_{\text{even}}^r q^{\frac{k_{\text{odd}}}{2}}$.~If $k_{\text{odd}} \neq k_{\text{even}}$,~$\beta$ is not a root of $\prod_{0 \leq i \leq \text{dim}(X)} P_{2i}(t)$,~and thus is necessarily a root of $R(t)$.~Thus $R(t)$ has a root on the closed disc of radius $\frac{1}{\lambda_{\text{odd}}^r q^{\frac{k_{\text{odd}}}{2}}}$.~Arguing as before using Proposition \ref{Fujiwara_trace_fixed_point} and Lemma \ref{exponential_rational_function} we conclude that,

\begin{center}
$\frac{1}{\lambda_{\text{odd}}^r q^{\frac{k_{\text{odd}}}{2}}} \geq \frac{1}{ \lambda_{\text{even}}^r q^{\frac{k_{\text{even}}}{2}}}$.
\end{center}

Hence $k_{\text{even}} \geq k_{\text{odd}}$.

\end{proof}

\begin{cor}\label{spectral_radius_arbitrary_base_field}

Let $f:X \to X$ be a self map of a proper scheme over an arbitrary field $k$.~Let $\ell$ be a prime invertible in $k$ and $\bar{k}$ an algebraic closure of $k$.~Fix an embedding $\tau: \Q_{\ell} \hookrightarrow \C$.~Then the spectral radius (with respect to $\tau$) for the action of $f_{\bar{k}}^*$ on the entire $\ell$-adic cohomology $H^*(X_{\bar{k}},\Q_{\ell})$ is equal to the spectral radius for its action on $\oplus_{ i \geq 0} H^{2i}(X_{\bar{k}},\Q_{\ell})$.

\end{cor}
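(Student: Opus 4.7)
The plan is to derive the corollary directly from Theorem \ref{entropy_restriction}(1), after a base-change step that reduces an arbitrary $k$ to the hypotheses of that theorem. Writing $\rho$ for the spectral radius (with respect to $\tau$) of $f_{\bar k}^*$ on the whole of $H^*(X_{\bar k},\Q_\ell)$, one has $\rho=\max(\lambda_{\text{even}},\lambda_{\text{odd}})$, so the assertion is equivalent to the inequality $\lambda_{\text{even}}\geq\lambda_{\text{odd}}$ applied to the pair $(X_{\bar k},f_{\bar k})$.

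Replacing $k$ by $\bar k$, we may assume $k$ algebraically closed. I would then run the same spreading-out argument already invoked in the proof of Theorem \ref{entropy_restriction}: choose a finitely generated $\Z[1/\ell]$-subalgebra $A\subset k$ together with a proper scheme $\pi:\mathcal{X}\to \Spec A$ and an $A$-endomorphism $F:\mathcal{X}\to \mathcal{X}$ such that base change to $k$ recovers the pair $(X,f)$. After shrinking $\Spec A$ to a non-empty connected open, I can arrange that each higher direct image $R^i\pi_*\Q_\ell$ is lisse on $\Spec A$ and carries an endomorphism induced by $F$.

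For any closed point $s\in\Spec A$ the residue field $\kappa(s)$ is finite, and for a geometric point $\bar s$ above $s$ the stalk of $R^i\pi_*\Q_\ell$ at $\bar s$ is canonically identified with $H^i(\mathcal{X}_{\bar s},\Q_\ell)$, while its stalk at a geometric point above $k$ recovers $H^i(X,\Q_\ell)$. Because an endomorphism of a lisse $\Q_\ell$-sheaf on a connected scheme has the same characteristic polynomial at every geometric point, the eigenvalues of $f^*$ on $H^i(X,\Q_\ell)$ coincide with those of $F_{\bar s}^*$ on $H^i(\mathcal{X}_{\bar s},\Q_\ell)$; in particular $\lambda_{\text{even}}$ and $\lambda_{\text{odd}}$ are the same for $(X,f)$ and for the fiber pair $(\mathcal{X}_{\bar s},F_{\bar s})$.

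Since $(\mathcal{X}_{\bar s}, F_{\bar s})$ is a self map of a proper scheme over the algebraic closure $\overline{\kappa(s)}$ of a finite field, Theorem \ref{entropy_restriction}(1) applies and gives $\lambda_{\text{even}}\geq\lambda_{\text{odd}}$, whence $\rho=\lambda_{\text{even}}$ as required. The only mildly subtle step is the specialization statement for the eigenvalues of $F^*$ acting on the lisse sheaves $R^i\pi_*\Q_\ell$; this is a standard consequence of proper and smooth base change together with the fact that stalks of a lisse sheaf at different geometric points are isomorphic as modules over any endomorphism, up to an inner automorphism by $\pi_1$.
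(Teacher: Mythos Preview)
Your proposal is correct and matches the paper's approach. The paper gives no explicit proof of this corollary, treating it as immediate from Theorem~\ref{entropy_restriction}(1); the spreading-out reduction you spell out (from an arbitrary algebraically closed field to the algebraic closure of a finite field, via proper base change and constancy of characteristic polynomials on lisse $R^i\pi_*\Q_\ell$) is precisely the ``standard spreading out argument'' already invoked in the first line of the proof of Theorem~\ref{entropy_restriction} for the case $k=\C$, and it works verbatim for an arbitrary base field.
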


The following simple example shows that the inequality $(1)$ in Theorem \ref{entropy_restriction} need not be strict.

\begin{ex}\label{entropy_equality_odd_even}

Let $k=\C$ and $X=E \times E \times E$ where $E$ is any elliptic curve over $\C$. 

Let $f$ be the automorphism of $X$ given by $f(x,y,z)=(2x+3y,x+2y,z)$.~We have $f=g \times 1_{E}$,~where $g:E \to E$ is the automorphism $(x,y) \mapsto (2x+3y,x+2y)$,~and $1_{E}$ is the identity map on $E$.~Let $\lambda_{\text{even}},~\lambda_{\text{odd}},~k_{\text{even}},~k_{\text{odd}}$ be as defined in the previous section for the action of $f^*$ on $H^*(X(\C),\Q)$.~One can easily show that $\lambda_{\text{even}}=\lambda_{\text{odd}}=(2+\sqrt{3})^2$,~while $k_{\text{even}}=4$ and $k_{\text{odd}}=3$.

\end{ex}

The following example shows that Theorem \ref{entropy_restriction} is false without the properness hypothesis,~even for smooth varieties.

\begin{ex}\label{theorem_false_non_proper}

Let $T/k$ be a rank $2$ torus (split over $\F_q$) and $f:T \to T$ any group automorphism of $T$.~Then,~$\text{ker}(f \circ F_{T,q}^n-1_T)$ is a finite \'etale group scheme.~Since it is a sub-group scheme of a torus,~its order (or rank) is co-prime to $p$,~the characteristic of $\F_q$.~For any integer $n \in \N$,~let $\text{Fix}(f \circ F^n_{T,q})$ be the number of fixed points of $f \circ F^n_{T,q}$ acting on $T$.~Then 

\begin{center}
$\text{Fix}(f \circ F^n_{T,q})=|P_f(q^n)|$
\end{center}

 where $P_f(t):=\text{det}(1-tM_f;X^*(T))$,~and $M_f$ is the linear map on the co-character lattice $X^*(T)$,~induced by $f$.

Let $\ell$ be a prime different from $p$.~The only non-trivial compactly supported $\ell$-adic cohomology groups of $T$ are $H^2_c(T,\Q_{\ell}) \simeq \Q_{\ell}$,~$H^3_c(T,\Q_{\ell}) \simeq X^{*}(T) \otimes_{\Q_{\ell}} \Q_{\ell}(-1)$ and $H^4_c(T,\Q_{\ell}) \simeq \bigwedge^{2} X^*(T) \otimes_{\Q_{\ell}} \Q_{\ell}(-2)$.~Thus for all integers $n \geq 1$,

\begin{center}
$\text{Tr}((f \circ F^n_{T,q})^*,H^*_c(T,\Q_{\ell}))=P_f(q^n)$.
\end{center}

Let $f$ be chosen such that $|\text{Tr}(M_f)|>2$ and $\text{det}(M_f)=1$.~Note that the eigenvalues of $f^*$ acting on the compactly supported $\ell$-adic cohomology are algebraic integers independent of $\ell$.~Since $\text{Tr}((f \circ F^n_{T,q})^*,H^*_c(T,\Q_{\ell}))=P_f(q^n)$,~it follows that,~for any embedding $\tau: \Q_{\ell} \hookrightarrow \C$,~

\begin{enumerate}

\item[(1)] $f^*$ acting on $H^3_c(T,\Q_{\ell})$ has at least one eigenvalue of modulus greater than $1$.

\item[(2)] $f^*$ acts as identity on $H^2_c(T,\Q_{\ell})$ and $H^4_c(T,\Q_{\ell})$.

\end{enumerate}

Hence $\lambda_{\text{odd}} > \lambda_{\text{even}}$.

\end{ex}

A careful look at the proof of Theorem \ref{entropy_restriction} shows that the failure of Theorem \ref{entropy_restriction} without the properness hypothesis (as in Example \ref{theorem_false_non_proper}),~can be attributed to $n_{0}(f^{r})$ (see Definition \ref{least_twist}) being strictly greater than $1$.~This motivates us to consider the growth of an upper bound for $n_{0}(f^r)$ with respect to $r$ for non-proper varieties.~In a future work it will be shown that there exists an upper bound which grows at most linearly with $r$.

\end{document}